\documentclass[11pt]{amsart}
\usepackage{amssymb,amsmath}
\usepackage{graphicx}

\newtheorem{theorem}{Theorem}[section]

\newtheorem{definition}[theorem]{Definition}

\newtheorem{lemma}[theorem]{Lemma}

\newtheorem{remark}[theorem]{Remark}

\newcommand{\norm}[1]{\left|\left|#1\right|\right|}
\newcommand{\abs}[1]{\left|#1\right|}
\newcommand{\quan}[1]{\textless#1\textgreater}

\begin{document}

\title{Time Relaxation with Iterative Modified Lavrentiev Regularization}
\author{Ming Zhong}
\address{Department of Applied Mathematics \& Statistics, Johns Hopkins University, Baltimore, MD}
\email{mzhong5@jhu.edu}

\maketitle

\begin{abstract}
A new time relaxation model with iterative modified Lavrentiev regularization method is studied.  The aim of the relaxation term is to drive the unresolved fluctuations in a computational simulation to zero exponentially faster by an appropriate and often problem-dependent choice of its time relaxation parameter; together with iterative modified Lavrentiev regularization, the model will give a better approximation through de-convolution with fewer steps to compute.  The goal of this paper herein is to understand how this time relaxation term acts to truncate solution scales and to use this understanding to give some helpful insight into parameter selection.
\end{abstract}

\section{Introduction}
Direct numerical simulation of a $3D$ turbulent flow typically is usually not considered as computationally feasible and desire, since it often requires a spatial mesh size of $N_{dof}^{NSE} \simeq O(Re^{9/4})$ points per time step \cite{H69}.  However the largest structures in the flow (containing most of the flow's energy) are commonly sought since they are responsible for much of the mixing and most of the flow's momentum transport.  Hence, various numerical regularization for truncating the small scales and turbulence models of the large scales are used for simulations seeking to predict flow averages instead.
\newline

In this paper, one such improved model is studied: a time relaxation with the iterative modified Lavrentiev regularization introduced as a numerical regularization.  This regularization method aims to truncate the small scales in a solution without altering the solution's large scales.  A number of useful properties were already discussed in \cite{MZ18}.  The effect of the time relaxation combined with iterative modified Lavrentiev regularization is the main focus of this paper, and an optimal choice of the time relaxation parameter $\chi$ is also discussed.
\newline

To introduce the Time Relaxation term, when added to the Navier-Stokes equations, a continuum model has to be considered; let the spatial domain be $\Omega = (0, L)^3$ and suppose periodic with zero mean boundary conditions are imposed on $\partial\Omega$:
\begin{equation}\label{sec1:0mean}
\phi(x + Le_j, t) = \phi(x, t) \quad and \quad \int_{\Omega}\! \phi(x, t) \; dx = 0 \quad for \quad \phi = u, p, f, u_0
\end{equation}
A local spatial filter associated with a length-scale $\delta$ is selected out of many possible choices, e.g., \cite{BIL05}, \cite{J04}, and \cite{S01}.  In this paper, only the differential filer is considered, \cite{G86} (related to a Gaussian, e.g., \cite{G00}): given a $L$-periodic $\phi(x)$, its average $\bar{\phi}$ is the unique $L$-periodic solution of the following:
\begin{equation}\label{sec1:dfilter}
-\delta^2\Delta\bar{\phi} + \bar{\phi} = \phi, \quad in \quad \Omega
\end{equation}
This filtering operator is defined as $G: G\phi = \bar{\phi}$ and $A: A\phi = -\delta^2\Delta\phi + \phi$ as the inverse of $G$ for convenient usage through out this paper.  The $N^{th}$ iterative modified Lavrentiev regularization operator $D_N$ (the special $N = 0$ case is also included) is defined compactly by:
\begin{eqnarray*}
D_0\bar{\phi} & := & ((1 - \alpha)G + \alpha I)^{-1}\bar{\phi} \\
D_N\bar{\phi} & := & (\sum_{j = 0}^{N}(I - D_0G)^j)D_0\bar{\phi}, \quad for\; N = 1, 2, 3, \ldots \\
\end{eqnarray*}
Section \ref{sec2:MITLR} will give more details on this de-convolution operator.  The (bounded) operator $D_N$ is an approximation to the (unbounded) inverse of the filter $G$ in the sense that for very smooth functions and as $\delta \rightarrow 0$:
\begin{equation*}
\phi = D_N\bar{\phi} + O(\alpha^{N + 1}\delta^{2N + 2})
\end{equation*}
e.g., \cite{MZ18}.  The model I consider is to find the $L$-periodic (with zero mean) velocity and pressure satisfying:
\begin{eqnarray}\label{sec1:model}
u_t + u\cdot\nabla u + \nabla p - \nu\Delta u + \frac{\chi}{\delta}(u - D_N(\bar{u})) = f, \quad in\; \Omega\times(0, T) \nonumber \\
u(x, 0) = u_0(x), \quad in \; \Omega \quad and \nonumber \\
\nabla\cdot u = 0, \quad in\; \Omega\times(0, T).
\end{eqnarray}
The Time Relaxation term $\chi$ will be specified later.  The term $u - D_n(\bar{u})$ is a generalized fluctuation included to drive fluctuations blow $O(\delta)$ to zero rapidly as $t \rightarrow \infty$ without affecting the order of accuracy of the model's solution $u$ as an approximation to the resolved scales ($\geq O(\delta)$).
\newline
To use Time Relaxation, the relaxation $\chi$ must be chosen.  Analytical guidance concerning its appropriate choice with respect to other problem parameters is essential.  Our work herein has been greatly inspired by \cite{LN07}.
\subsection{Summary of Results}
The results are presented in the following sections with full details.  I give here an overview of the main results of this paper keeping notation as simple and transparent as possible and describing only the most interesting cases. 
\newline

Section \ref{sec3:est} reviews the analytic framework of the space-periodic problem.  Using simple energy estimates, I show that the component of the solution of (\ref{sec1:model}) fluctuating below $O(\delta)$ must $\rightarrow 0$ in $L^2(\Omega\times(0, T))$ as $\chi \rightarrow \infty$.  This result follows directly from the continuum equations (\ref{sec1:model}) and validates the relaxation term as a general computational strategy but it sheds no light into parameter selection or the details of how scales are truncated by the relaxation term. In section \ref{sim}, I demonstrate those details by developing a similarity theory for (\ref{sec1:model}) following the $K-41$ theory of the Navier-Stokes equations.  There are several interesting cases, but the most important consequences for practical computing is the following predicted optimal scaling of the relaxation parameter which forces the model's micro-scale $\eta_{model} = O(\delta)$:
\begin{equation}
\chi \simeq \frac{U}{L^{\frac{1}{3}}}\delta^{\frac{1}{3}}(1 + \frac{1}{\alpha})^{N + 1}
\end{equation}
Note the $\chi \rightarrow \infty$ as $\alpha \rightarrow 0$ as required in the analytic estimates of Section \ref{sim}.  For this value of the relaxation term the consistency error of the relaxation term is:
\begin{equation*}
|\frac{\chi}{\delta}(u - D_N\bar{u})| = O(\chi\alpha^{N + 1}\delta^{2N + 1}) = O((1 + \alpha)^{N + 1}\delta^{2N + \frac{7}{3}})
\end{equation*}
Note that this consistency error $\rightarrow 0$ as either $\alpha$ or $\delta \rightarrow 0$, so the solution from the Time Relaxation plus iterative modified Lavrentiev regularization will converge to the true solution of NSE.

\section{Preliminaries}
The De-convolution problem is central in both image processing \cite{BB98} and turbulence modelling in large eddy simulation \cite{BIL05, Geu97, LL03, LL05, LL07}.  The basic problem in approximate de-convolution is: given $\bar{u}$, then find a \textit{better} approximation of u.  In other words, solve the following equation for an approximation which is appropriate for the application at hand:
\begin{equation*}
Gu = \bar{u}, \quad solve\;for\;u
\end{equation*}
For most filtering operators, $G$ is symmetric and positive semi-definite.  Typically, $G$ is not invertible.  Thus, this de-convolution problem is ill posed.
\subsection{The Modified Iterative Tikhnovo-Lavrentiev Deconvolution}\label{sec2:MITLR}
The Modified Tikhonov-Lavrentiev Deconvolution was first studied in \cite{MZ18}.  So for each $N = 0, 1, /ldots$, it computes an approximate solution $u_N$ to the above deconvolution by $N$ steps of a fixed point iteration \cite{BB98, MZ18}.  Rewrite the above de-convolution equation as the fixed point problem:
\begin{equation*}
given \quad \bar{u} \quad solve\; u = ((1 - \alpha)G + \alpha I)^{-1}u + ((1 - \alpha)G + \alpha I)^{-1}(\bar{u} - Gu) \quad for\; u
\end{equation*}
The de-convolution approximation is then computed as follows.
\begin{eqnarray}
((1 - \alpha)G + \alpha I)u_0 & = &  \bar{u} \nonumber \\
((1 - \alpha)G + \alpha I)(u_n - u_{n - 1}) & = & \bar{u} - Gu_{n - 1}, \quad for\; n = 1, 2, \ldots
\end{eqnarray}
Although $G$ is non-invertible, $((1 - \alpha)G + \alpha I)$ will be invertible for appropriate choice of $0 < \alpha < 1$.  And in \cite{MZ18}, it can be shown that there is an optimal stopping $N$, which gives the best approximation to $u$.  And the de-convolution problem is easier to compute, and the operator $((1 - \alpha)G + \alpha I)$ is symmetric.  So convergence as $N \rightarrow \infty$ can be expected.
\begin{definition}
The $N^{th}$ iterative modified Lavrentiev regularization operator $D_N : L^2(\Omega) \rightarrow L^2(\Omega)$ is the map $D_N: \bar{u} \rightarrow u_N$, or $D_N(\bar{u}) = u_N$.  $H_N$ denotes the map $H_N : L^2(\Omega) \rightarrow L^2(\Omega)$ by $H_N(\phi) := D_NG\phi = D_N(\bar{\phi})$.
\end{definition}
By eliminating the intermediate steps, it is easy to find an explicit formula for the $N^{th}$ de-convolution operator $D_N$:
\begin{equation}
D_N\phi = (\sum_{j = 0}^{N}(I - D_0G)^j)D_0\phi, \quad where\; D_0 = ((1 - \alpha)G + \alpha I)
\end{equation}
The consistency error $e_n = u - u_N = u - D_N\bar{u}$ of $D_N$ as an approximate inverse of $G$ is shown to be $O(\alpha^{N + 1}\delta^{2N + 2})$ in \cite{MZ18}.

\section{Energy Estimates}\label{sec3:est}
Recall that I impose the zero men condition $\int_{\Omega}\! \phi \; dx = 0$ on $\phi = u$, $p$, $f$, and $u_0$. We can thus expand the fluid velocity in a Fourier series
\begin{equation*}
u(\mathbf{x}, t) = \sum_{\mathbf{k}}\hat{u}(\mathbf{k}, t)e^{-i\mathbf{k}\cdot\mathbf{x}}, \quad \mathbf{k} = \frac{2\pi\mathbf{n}}{L}\;is\;the\;wave\;number\;and\;\mathbf{n} \in \mathbb{Z}^3
\end{equation*}
The Fourier coefficients are given by
\begin{equation*}
\hat{u}(\mathbf{k}, t) = \frac{1}{L^3}\int_{\Omega}\! u(\mathbf{x}, t)e^{-i\mathbf{k}\cdot\mathbf{x}}\; d\mathbf{x}.
\end{equation*}
Magnitudes of $\mathbf{k}$ and $\mathbf{n}$ are defined by:
\begin{eqnarray*}
\abs{\mathbf{n}} = \{\abs{n_1}^2 + \abs{n_2}^2 + \abs{n_3}^2\}^{\frac{1}{2}}, \quad \abs{\mathbf{k}} = \frac{2\pi
\abs{\mathbf{n}}}{L}, \\
\abs{\mathbf{n}}_{\infty} = max\{\abs{n_1}, \abs{n_2}, \abs{n_3}\}, \quad \abs{\mathbf{k}}_{\infty} = \frac{2\pi
\abs{\mathbf{n}}_{\infty}}{L}
\end{eqnarray*}
The length-scale of the wave number $\mathbf{k}$ is defined by $l = \frac{2\pi}{\abs{\mathbf{k}}_{\infty}}$.  Parseval's equality implies that the energy in the flow can be decomposed by wave number as follows.  For $u \in L^2(\Omega)$, 
\begin{eqnarray*}
\frac{1}{L^3}\int_{\Omega}\! \frac{1}{2}\abs{u(\mathbf{x}, t)}^2\; d\mathbf{x} = \sum_{\mathbf{k}}\frac{1}{2}\abs{\hat{u}(\mathbf{k}, t)}^2 = \sum_{k}(
\sum_{\abs{\mathbf{k}} = k}\frac{1}{2}\abs{\hat{u}(\mathbf{k}, t)}^2), \\
where\;\mathbf{k} = \frac{2\pi\mathbf{n}}{L}is\;the\;wave\;number\;and\;\mathbf{n} \in \mathbb{Z}^3.
\end{eqnarray*}
Let $\quan{\cdot}$ denote a long time averaging (e.g., \cite{R95}),
\begin{equation}
\quan{\phi}(\mathbf{x}) := \limsup_{T \rightarrow \infty}\frac{1}{T}\int_{0}^{T}\! \phi(\mathbf{x}, t) \; dt.
\end{equation}
\begin{definition}
The kinetic energy distribution functions are defined by
\begin{equation*}
E(k, t) = \frac{L}{2\pi} \sum_{\abs{\mathbf{k}} = k}\frac{1}{2}\abs{\hat{u}(\mathbf{k}, t)}^2 \quad and \quad E(k) := \quan{E(k, t)}.
\end{equation*}
\end{definition}
Parseval's equality thus can be re-writen as the following:
\begin{eqnarray*}
\frac{1}{L^3}\int_{\Omega}\! \frac{1}{2}\abs{u(\mathbf{x}, t)}^2\; d\mathbf{x} = \frac{2\pi}{L}\sum_{k}E(k, t) \quad and \\
\quan{\frac{1}{L^3}\int_{\Omega}\! \frac{1}{2}\abs{u(\mathbf{x}, t)}^2\; d\mathbf{x}} = \frac{2\pi}{L}\sum_{k}E(k).
\end{eqnarray*}
\begin{lemma}
Define the bounded linear operator $H_N : L^2(\Omega) \rightarrow L^2(\Omega)$ by $H_N\phi = D_NG\phi$.  Then, $H_N$ and $I - H_N$ are both symmetric, positive semi-definite operators on $L_0^2(\Omega)$. For $u \in L_0^2(\Omega)$,
\begin{equation*}
\int_{\Omega}\! (u - H_Nu)\cdot u \; d\mathbf{x} \geq 0, \quad \int_{\Omega}\! (H_Nu)\cdot u \; d\mathbf{x} \geq 0.
\end{equation*}
\end{lemma}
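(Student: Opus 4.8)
The plan is to diagonalize every operator in play simultaneously in the Fourier basis. Since $A\phi = -\delta^2\Delta\phi + \phi$ and $G = A^{-1}$, the operators $G$, $D_0$, $D_N$ and $H_N$ are all functions of $-\Delta$ and hence Fourier multipliers sharing the eigenfunctions $e^{-i\mathbf{k}\cdot\mathbf{x}}$; replacing each operator by its scalar symbol turns symmetry and positive semi-definiteness into pointwise statements about real numbers, and Parseval's equality converts the two quadratic forms in the statement into sums. First I would record the symbols of the building blocks. The symbol of $G$ on the mode $\mathbf{k}$ is $g = g(\mathbf{k}) = (1 + \delta^2\abs{\mathbf{k}}^2)^{-1}$, and crucially on $L_0^2(\Omega)$ we have $\mathbf{k}\neq 0$, so $g \in (0,1)$. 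Thus $(1-\alpha)G + \alpha I$ has symbol $(1-\alpha)g + \alpha \in (0,1)$ for $0 < \alpha < 1$, the operator $D_0 = ((1-\alpha)G + \alpha I)^{-1}$ has symbol $1/((1-\alpha)g + \alpha)$, and writing $\beta = g/((1-\alpha)g + \alpha)$ for the symbol of $D_0 G$, the operator $I - D_0 G$ has symbol
\begin{equation*}
r = r(\mathbf{k}) = 1 - \beta = \frac{\alpha(1 - g)}{(1-\alpha)g + \alpha}.
\end{equation*}

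Next I would verify that $r \in (0,1)$: positivity is clear, and the key algebraic point is that the denominator minus the numerator is exactly $((1-\alpha)g + \alpha) - \alpha(1-g) = g > 0$, so $r < 1$. With $r \in (0,1)$ the geometric sum defining $D_N$ is immediate, giving $D_N = (\sum_{j=0}^N (I - D_0 G)^j)D_0$ the symbol $\frac{1 - r^{N+1}}{1-r}\cdot\frac{1}{(1-\alpha)g + \alpha}$. Multiplying by $g$ to form $H_N = D_N G$ and using the cancellation $1 - r = \beta = g/((1-\alpha)g+\alpha)$, the prefactor $\frac{1}{1-r}\cdot\frac{g}{(1-\alpha)g+\alpha}$ collapses to $1$, leaving the clean symbol
\begin{equation*}
\widehat{H_N}(\mathbf{k}) = 1 - r(\mathbf{k})^{N+1} \in (0,1).
\end{equation*}

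Finally I would translate back. Since $\widehat{H_N}$ is real, $H_N$ is symmetric, and so is $I - H_N$, whose symbol is $r^{N+1}$. Because both symbols lie in $(0,1)$, Parseval's equality yields, for $u \in L_0^2(\Omega)$,
\begin{equation*}
\int_{\Omega}\! (H_N u)\cdot u \; d\mathbf{x} = L^3\sum_{\mathbf{k}} \widehat{H_N}(\mathbf{k})\,\abs{\hat{u}(\mathbf{k})}^2 \geq 0,
\end{equation*}
and the identical computation with $r^{N+1}$ in place of $\widehat{H_N}$ shows $\int_{\Omega} (u - H_N u)\cdot u\,d\mathbf{x} \geq 0$, which is exactly the claim. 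The only genuine obstacle is the bookkeeping in the two cancellations: establishing $r < 1$ through the identity that the denominator minus the numerator equals $g$, and seeing the geometric-series prefactor cancel against $g$ so that $\widehat{H_N} = 1 - r^{N+1}$. Everything else is routine once the problem is phrased in terms of Fourier symbols.
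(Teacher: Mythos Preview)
Your proposal is correct and is essentially the same argument as the paper's: both compute the Fourier symbol of $H_N$, sum the geometric series to obtain $\widehat{H_N} = 1 - r^{N+1}$ with $r \in [0,1]$, and invoke Parseval's equality to conclude. The only cosmetic difference is that the paper substitutes $z = \sqrt{\alpha}\,\delta k$ so that $\hat{D}_0\hat{G} = (1+z^2)^{-1}$ and $r = z^2/(1+z^2)$ explicitly, whereas you keep the computation in terms of $g$ and $r$; your $r = \alpha(1-g)/((1-\alpha)g+\alpha)$ is exactly $z^2/(1+z^2)$ after unwinding $g = (1+\delta^2 k^2)^{-1}$.
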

\begin{proof}
Both $H_N$ and $D_N$ are functions of the symmetric positive semi-definite operator $G$, so symmetry is immediate and positivity is easily established in the periodic case by a direct calculation using Fourier series.  To begin, expand $u(\mathbf{x}, t) = \sum_{\mathbf{k}}\hat{u}(\mathbf{k}, t)e^{-i\mathbf{k}\cdot\mathbf{x}}$, where $\mathbf{k} = \frac{2\pi\mathbf{n}}{L}$ is the wave number and $\mathbf{n} \in \mathbb{Z}^3$.  With the corresponding $\hat{G}(k) = \frac{1}{1 + \delta^2k^2}$ and $\hat{D}_0(k) = \frac{1 + \delta^2k^2}{1 + \alpha\delta^2k^2}$ and direct calculation using Parseval's equality
\begin{eqnarray*}
\frac{1}{2L^3}\int_{\Omega}\! (H_Nu)\cdot u d\mathbf{x} = \frac{2\pi}{L}\sum_{k}\hat{H}_N(k)E(k, t), \\
where \; \hat{H}_N(k) = \frac{1}{1 + z^2}\sum_{j = 0}^{N}(1 - \frac{1}{1 + z^2})^j, \quad where \; z = \sqrt{\alpha}\delta k
\end{eqnarray*}
The expression for $\hat{H}_N(k)$ can be simplified by summing the geometric series.  Thus giving
\begin{equation*}
\hat{H}_N(k) = 1 - (\frac{z^2}{1 + z^2})^{N + 1}, \quad z = \sqrt{\alpha}\delta k
\end{equation*}
%\begin{figure}\label{sec3:hnk}
%\centerline{\includegraphics[scale=0.35]{transfer.pdf}}
%\caption{Transfer function $\hat{H}(\cdot)$ , $N = 5$, $10$, $100$.}
%\end{figure}
Since $z$ is real, $0 \leq \frac{z^2}{1 + z^2} \leq 1$, and $0 \leq 1 - \frac{z^2}{1 + z^2} \leq 1$.  Thus I have shown
\begin{equation*}
0 \leq \int_{\Omega}\! (H_Nu)\cdot u \; d\mathbf{x} \leq \int_{\Omega}\! \abs{u}^2 \; d\mathbf{x}.
\end{equation*}
Similarly, it can be shown $0 \leq 1 - \hat{H}_N(k) \leq 1$ and 
\begin{equation*}
0 \leq \int_{\Omega}\! (u - H_Nu)\cdot u \; d\mathbf{x} \leq \int_{\Omega}\! \abs{u}^2 \; d\mathbf{x},
\end{equation*}
which completes the proof.
\end{proof}

It is insightful to plot the transfer function $\hat{H}_N(k) = 1 - (\frac{z^2}{1 + z^2})^{N + 1}$ for a few values of $N$.  I do so for $N = 5$, $10$, $100$ (see Fig. \ref{sec3:hnk}).
\newline

Examining these graphs, I observe that $H_N(u)$ is very close to $u$ for the low frequencies/largest solution scales and that $H_N(u)$ attenuates small scales/high frequencies.  The breakpoint between the flow frequencies and high frequencies is somewhat arbitrary.

\begin{remark}
By the above lemma and energy estimate, and the model's relaxation term thus extracts energy from resolved scales.  Thus, I can define an energy dissipation rate induced by time relaxation for (ref{sec1:model}) as the following:
\begin{equation}
\varepsilon_{model}(u)(t) := \frac{1}{L^3}\int_{\Omega}\! \frac{\chi}{\delta}(u - H_Nu)\cdot u \; d\mathbf{x}.
\end{equation}
The models kinetic energy is the same for the Euler equations:
\begin{equation}
E_{model}(u)(t) := \frac{1}{L^3}\frac{1}{2}\norm{u(t)}^2.
\end{equation}
\end{remark}

\section{A Similarity Theory of Time Relaxation with Modified Iterative Tikhnovo-Lavrentiev Deconvolution}\label{sim}
I consider now the Navier-Stokes equations with timer relaxation at a high enough Reynolds number and large enough relaxation coefficient that viscous dissipation is negligible.  It would be natural to ask whether this new Time Relaxation model actually truncates scales.
\newline
First I want to find the model's equivalent of the large scales' Reynolds number of the Navier-Stokes equations.  Recall the Reynolds number for the Navier-Stokes equations is the ratio of non-linearity to viscous terms acting on the largest scales:
\begin{equation*}
for\;the\;NSE: Re \simeq \frac{\abs{u\cdot\nabla u}}{\abs{\nu\nabla u}} \simeq \frac{U\frac{1}{L}U}{\nu\frac{1}{L^2}U} = \frac{UL}{\nu}
\end{equation*}
The NSE's Reynolds number with respect to the smallest scales is obtained by replacing the large scales velocity and length by their small scales equivalent as in $Re_{small} = \frac{u_{small}\eta}{\nu}$.  To process I mush find the physically appropriate and mathematically analogous quantity for the new Time Relaxation model. Again, this derivation is under the assumption that the viscous dissipation is negligible compared to dissipation caused by Time Relaxation.
\newline
Proceeding analogously, it is similar that the ratio of the non-linearity to dissipative effects should be the analogous quantity, and it should be:
\begin{equation} \label{sec4:ren}
Re_N \simeq \frac{\abs{u\cdot\nabla u}}{\frac{\chi}{\delta}(u - H_Nu)}.
\end{equation}
Keeping in mind that for large scales $\delta\ll L$ and $0 < \alpha < 1$, then I can simplify the previous equation (\ref{sec4:ren})
\begin{eqnarray*}
Re_N & \simeq & \frac{|u\cdot\nabla u|}{|\frac{\chi}{\delta}(u - H_nu)|} \\
     & \simeq & \frac{\frac{U^2}{L}}{\frac{\chi}{\delta}(\frac{\alpha\delta^2}{L^2})^{N + 1}(1 + \frac{\alpha\delta^2}{L^2})^{-(N + 1)}U} \\
     & =      & \frac{UL^{2N + 1}}{\chi\alpha^{N + 1}\delta^{2N + 1}}(1 + \frac{\alpha\delta^2}{L^2})^{N + 1} \\
     & \simeq & \frac{UL^{2N + 1}}{\chi\alpha^{N + 1}\delta^{2N + 1}}
\end{eqnarray*}
This parameter definition can also be obtained by non-dimensionalization.
\begin{definition}
The non-dimensionalization time relaxation parameter for the new Time Relaxation model is:
\begin{equation}
Re_N = \frac{UL^{2N + 1}}{\chi\alpha^{N + 1}\delta^{2N + 1}}, \quad N = 0, 1, 2, \ldots
\end{equation}
\end{definition}
Next I must form the small scales parameters which measure the ratio of non-linearity to dissipation on the smallest persistent eddies.  Let $u_{small}$ denote a characteristic velocity of the smallest persistent eddies and let $\eta_{model}$ denote the length scale associated with them.  Then exactly as in equation (\ref{sec4:ren}), I calculate:
\begin{eqnarray*}
Re_{N-small} & \simeq & \frac{|u_{small}\cdot\nabla u_{small}|}{|\frac{\chi}{\delta}(u_{small} - H_Nu_{small})|} \\
             & \simeq & \frac{\frac{u_{small}^2}{\eta_{model}}}{\frac{\chi}{\delta}(\frac{\alpha\delta^2}{\eta_{model}^2})^{N + 1}(1 + \frac{\alpha\delta^2}{\eta_{model}^2})^{-(N + 1)}u_{small}} \\
             & =      & \frac{u_{small}\eta_{model}^{2N + 1}}{\chi\alpha^{N + 1}\delta^{2N + 1}}(1 + \frac{\alpha\delta^2}{\eta_{model}^2})^{N + 1} \\
\end{eqnarray*}
It is no longer reasonable to assume any order relationship between $\delta$ and $\eta_{model}$.
\begin{definition}
Let $u_{small}$ and $\eta_{model}$ denote, respectively, a characteristic velocity and length of smallest persistent structures in the flow.  The non-dimensionalized parameter associated with the smallest persistent scales of new Time Relaxation model is
\begin{equation}
Re_{N-small} = \frac{u_{small}\eta_{model}^{2N + 1}}{\chi\alpha^{N + 1}\delta^{2N + 1}}(1 + \frac{\alpha\delta^2}{\eta_{model}^2})^{N + 1}
\end{equation}
\end{definition}
In order to find out what $\chi$ is in terms of $\delta$ and $\alpha$, the following calculation is based upon two principles:
\begin{equation*}
Re_{N-small} = O(1) \quad at\;length-scale\;\eta_{model}
\end{equation*}
and statistical equilibrium in the form \textit{energy input at large scales $=$ dissipation at small scales}.  As in the Navier-Stokes equations, the new Time Relaxation term's energy cascade is halted by dissipation caused by the time relaxation effects grinding down eddies exponentially fast when $Re_{N-small} = O(1)$ at length-scale $\eta_{model}$.   The rate of energy input, at the large scales, is $O(\frac{U^3}{L}$, exactly as in the Navier-Stokes case.  The dissipation at the smallest resolved scales, estimated carefully, is
\begin{eqnarray*}
dissipation\;at\;small\;scales & \simeq & \frac{\chi}{\delta}(u - H_N(u))u \\
                               & \simeq & \frac{\chi}{\delta}(-\alpha\delta^2)^{N + 1} (D_0G)^{N + 1} (\Delta^{N + 1}u)u \\
                               & \simeq & \frac{\chi}{\delta}(\frac{\alpha\delta^2}{\eta_{model}^2})^{N + 1}(1 + \frac{\alpha\delta^2}{\eta_{model}^2})^{-(N + 1)}u_{small}^2 \\
\end{eqnarray*}
These two conditions hence give the following pair of equations
\begin{eqnarray*}
\frac{u_{small}\eta_{model}^{2N + 1}}{\chi\alpha^{N + 1}\delta^{2N + 1}}(1 + \frac{\alpha\delta^2}{\eta_{model}^2})^{N + 1} \simeq 1 \\
\frac{U^3}{L} \simeq \frac{\chi}{\delta}(\frac{\alpha\delta^2}{\eta_{model}^2})^{N + 1}(1 + \frac{\alpha\delta^2}{\eta_{model}^2})^{-(N + 1)}u_{small}^2
\end{eqnarray*}
The characteristic velocity of the smallest eddies, $u_{small}$ can be solved in terms of the other parameters
\begin{equation*}
u_{small} \simeq \frac{\chi\alpha^{N + 1}\delta^{2N + 1}}{\eta_{model}^{2N + 1}(1 + \frac{\alpha\delta^2}{\eta_{model}^2})^{N + 1}}
\end{equation*}
Inserting this $u_{small}$ to the second equation yields the following equation determining the model's micro-scale
\begin{equation} \label{sec4:micro}
\frac{U^3}{L} \simeq \frac{\chi}{\delta}(\frac{\alpha\delta^2}{\eta_{model}^2})^{N + 1}(1 + \frac{\alpha\delta^2}{\eta_{model}^2})^{-(N + 1)}[\frac{\chi\alpha^{N + 1}\delta^{2N + 1}}{\eta_{model}^{2N + 1}(1 + \frac{\alpha\delta^2}{\eta_{model}^2})^{N + 1}}]^2
\end{equation}
However in order to determine the model's miscro-scale, I have to discuss three different cases: $\delta < \eta_{model}$, $\delta > \eta_{model}$, and $\delta = \eta_{model}$.  And the third case will provide the most important insight into practical computation.
\newline
\textbf{Case 1} (\textit{Fully Resolved}).  In this case $\delta < \eta_{model}$ and I also have $0 < \alpha < 1$, so that $1 + \frac{\alpha\delta^2}{\eta_{model}^2} \simeq 1$.  And the equation (\ref{sec4:micro}) reduces to
\begin{equation*}
\frac{U^3}{L} \simeq \frac{\chi}{\delta}(\frac{\alpha\delta^2}{\eta_{model}^2})^{N + 1}[\frac{\chi\alpha^{N + 1}\delta^{2N + 1}}{\eta_{model}^{2N + 1}}]^2
\end{equation*}
Solving for $\eta_{model}$, I have the following
\begin{equation*}
\eta_{model} = (\frac{\chi^3L}{U^3})^{\frac{1}{6N + 4}}\;\alpha^{\frac{1}{2} + \frac{1}{6N + 4}}\;\delta^{1 - \frac{1}{6N + 4}}
\end{equation*}
\newline
\textbf{Case 2}(\textit{Under Resolved}).  In this case $\delta > \eta_{model}$, there can be three different situations $\alpha < (\frac{\eta_{model}}{\delta})^2$, $\alpha > (\frac{\eta_{model}}{\delta})^2$, and $\alpha = (\frac{\eta_{model}}{\delta})^2$.  When $\alpha$ is small enough, $\eta_{model}$ will be the same from the fully resolved case; When $\alpha > (\frac{\eta_{model}}{\delta})^2$, $1 + \frac{\alpha\delta^2}{\eta_{model}^2} \simeq \frac{\alpha\delta^2}{\eta_{model}^2}$, the equation (\ref{sec4:micro}) can be simplified as the following
\begin{equation*}
\frac{U^3}{L} \simeq \frac{\chi}{\delta}(\frac{\alpha\delta^2}{\eta_{model}^2})^{N + 1}(\frac{\alpha\delta^2}{\eta_{model}^2})^{-(N + 1)}[\frac{\chi\alpha^{N + 1}\delta^{2N + 1}}{\eta_{model}^{2N + 1}(\frac{\alpha\delta^2}{\eta_{model}^2})^{N + 1}}]^2
\end{equation*}
After further simplification, I get
\begin{equation*}
\eta_{model} = \sqrt{\frac{U^3\delta^3}{\chi^3L}}
\end{equation*}
I do not, however, have a very appealing interpretation on how I can use this $\eta_{model}$ for future prediction of turbulent flow. \\
And if $\alpha = (\frac{\eta_{model}}{\delta})^2$, $1 + \frac{\alpha\delta^2}{\eta_{model}^2} \simeq 2$.  I can determine the choice of relaxation parameter that enforces $\alpha = (\frac{\eta_{model}}{\delta})^2$.  Setting $\alpha\delta^2 = \eta_{model}$ and solving for $\chi$ gives
\begin{equation*}
\chi = \frac{U}{L^{\frac{1}{3}}} 2^{N + 1} (\frac{\delta}{\alpha})^{\frac{1}{3}}
\end{equation*}
The consistency error of the relaxation term (evaluated for smooth flow fields) is, for this scaling of relaxation parameter,
\begin{equation*}
|\frac{\chi}{\delta}(u - D_N\bar{u})| = O(\chi\alpha^{N + 1}\delta^{2N + 1}) = O(\alpha^{N + \frac{2}{3}}\delta^{2N + \frac{4}{3}})
\end{equation*}
Even though the ratio $\frac{\delta}{\alpha}$ might be greater than $1$, the consistency error is still going to $0$, for appropriate choice of $\alpha$ and $\delta$.
\newline
\textbf{Case 3} (\textit{Perfectly Resolved}).  In this case $\delta = \eta_{model}$, so that $1 + \frac{\alpha\delta^2}{\eta_{model}^2} = 1 + \alpha$.    Equation (\ref{sec4:micro}) can be simplified as follows
\begin{equation*}
\frac{U^3}{L} \simeq \frac{\chi^3}{\delta} (\frac{\alpha}{1 + \alpha})^{3N + 3}
\end{equation*}
Solving for $\chi$ gives
\begin{equation} \label{sec4:chi}
\chi \simeq \frac{U}{L^{\frac{1}{3}}}\delta^{\frac{1}{3}}(1 + \frac{1}{\alpha})^{N + 1}
\end{equation}
The associated consistency error with this particular choice of $\chi$ gives the following estimate
\begin{equation*}
|\frac{\chi}{\delta}(u - D_N\bar{u})| = O(\chi\alpha^{N + 1}\delta^{2N + 1}) = O((1 + \alpha)^{N + 1}\delta^{2N + \frac{7}{3}})
\end{equation*}
As $\alpha \rightarrow 0$, $\chi \rightarrow \infty$, and the consistency error is going to $O(\delta^{2N + \frac{7}{3}})$, which will still go to zero as $\delta \rightarrow 0$.
\subsection{Interpreting the assumption that viscous dissipation is negligible}
My assumption that viscous dissipation is negligible compared to dissipation caused by time relaxation holds when the Kolmogorov micro-scale for the Navier-Stokes equations is very small compared to the model's micro-scale induced by the Time Relaxation term.  It is because that by the $K-41$ theory, viscous dissipation is considered negligible at scales above the Kolmogorov micro-scale.  Since the model's micro-scale is indeed above the Kolmogorov micro-scale (for computational practical purpose), with high enough Reynolds number and large enough time relaxation parameter, it is possible that the Time Relaxation term dominates the viscosity, forcing the latter to be negligible.
\newline
The first interpretation of "large enough" is that $\eta_{model} \gg \eta_{Kolmogorov}$.  When $\eta_{model} \gg \eta_{Kolmogorov}$, I can consider the case when $\delta \leq \eta_{model}$, and have the following results
\begin{equation*}
(\frac{\chi^3L}{U^3})^{\frac{1}{6N + 4}}\;\alpha^{\frac{1}{2} + \frac{1}{6N + 4}}\;\delta^{1 - \frac{1}{6N + 4}} = \eta_{model} > \eta_{Kolmogorov} = Re^{-\frac{3}{4}}L
\end{equation*} 
Hence I have the following lower bound for $\chi$
\begin{equation}
\chi > (Re^{-\frac{3}{4}}L)^{2N + \frac{4}{3}}\frac{U}{L^{\frac{1}{3}}}\alpha^{-(N + 1)}\delta^{-(2N + 1)}
\end{equation}
In the typical case of $\delta \gg \eta_{Kolmogorov}$ and $\chi$ large, it will place almost no constraint upon the time relaxation parameter.
\\
The second interpretation is that at $\eta_{model} = \eta_{Kolmogorov}$, $Re_{small} \gg Re_{N-small}$ and $Re_{small} = \frac{u_{small}\eta_{Kolmogorov}}{\nu}$; this also gives a mild condition on $\chi$:
\begin{equation}
\chi > \nu (\frac{\delta}{\eta_{model}})^{-2N} \delta^{-1} \alpha^{-(N + 1)} (1 + \frac{\alpha\delta^2}{\eta_{model}^2})^{N + 1}
\end{equation}
\section{Conclusions and Open Problems}
This Time Relaxation with iterative modified Lavrentiev regularization possesses an energy cascade that truncates that energy spectrum at a point that depends upon the global velocity $U$, the global length scale $L$, the de-convolution parameter $\alpha$, and the filtering radius $\delta$.  This Time Relaxation term does not dissipate appreciable energy for the resolved scales of the flow for $N$ large enough.  The action of this time relaxation term is to induce a micro-scale, analogous to the Kolmogorov micro-scale in the turbulence, and to trigger decay of eddies at the model's own micro-scale.  The extra dissipation at the cut-off length scale induced by time relaxation must reduce the number of degrees of freedom needed (per time step) for a $3D$ turbulent flow simulation.  With proper choice of $\chi$, this extra dissipation will also balance the transfer of energy to those scales from the flow's power input and thus prevent a non-physical accumulation of energy around the cut-off length scale as well as force the model's micro-scale to coincide with the averaging radius $\delta$ and de-convolution parameter $\alpha$. \\
From equation (\ref{sec4:chi}), $\chi \simeq \frac{U}{L^{\frac{1}{3}}}\delta^{\frac{1}{3}}(1 + \frac{1}{\alpha})^{N + 1}$, the model's micro-scale is $\delta$ and the number of degrees of freedom (per time step) needed for a $3D$ turbulent flow simulation with the model (\ref{sec1:model}) is
\begin{equation*}
N_{dof} \simeq (\frac{L}{\delta})^3, \quad independent \; of \; Re!
\end{equation*}
This leads to a \textit{huge} computational speedup using (\ref{sec1:model}) over a DNS of
\begin{equation*}
(\frac{N_{dof}^{NSE}}{N_{dof}})^{\frac{4}{3}} \simeq (\frac{Re^{\frac{9}{4}}}{L^3\delta^{-3}}) = (\frac{\delta}{L})^4 Re^3
\end{equation*}
This Time Relaxation combined with iterative modified Lavrentiev regularization provided a faster and yet cheaper way to compute a de-convoluted solution, with consistency error $\abs{u - D(\bar{u})} = O(\alpha^{N + 1}\delta^{2N + 2})$, providing better accuracy.  The above value of $\chi$ is derived for fully developed turbulent flow.

\newpage

\end{document}